\theoremstyle{plain}
\newtheorem{theorem}[equation]{Theorem}
\newtheorem{proposition}[equation]{Proposition}
\newtheorem{lemma}[equation]{Lemma}
\theoremstyle{remark}
\newtheorem{remark}{Remark}
\newcommand{\dbar}{\overline{\partial}}
\newcommand{\dbarstar}{\overline{\partial}^{\ast}}
\newcommand{\Cn}{\mathbb{C}^n}
\newcommand{\BOm}{\mathbf{B}_{\Omega}}
\newcommand{\BOmg}{\mathbf{B}_{\Omega_g}}
\newcommand{\Om}{\Omega}
\renewcommand\Re{\operatorname{Re}}
\begin{document}

\title[Regularity and Nebenh\"ulle on Hartogs Domains]
{Regularity of Canonical Operators and Nebenh\"ulle: Hartogs Domains}
\author{Yunus E. Zeytuncu}
\address{Texas A\&M University, Department of
Mathematics, College Station, TX 77843
\textit{Current Address:} Department of Mathematics and Statistics, University of Michigan-Dearborn, Dearborn, MI 48128}

\email{zeytuncu@math.tamu.edu, zeytuncu@umich.edu}
\begin{abstract}
We relate the regularity of the Bergman projection operator and the canonical solution operator to the Nebenh\"ulle of complete Hartogs domains. 
\end{abstract}
\subjclass[2000]{32A07, 32A25, 32F25}
\keywords{Bergman projection; Canonical solution operator; Nebenh\"ulle; Hartogs domains}

\maketitle
\section{Introduction}
\subsection{Definitions}
In this section, we present definitions and notations for the terms that are used in the paper. The reader can refer to \cite{StraubeBook} for the details of these and other definitions. 

Let $\Om$ be a bounded pseudoconvex domain in $\mathbb{C}^n$ and let $L^2_{(0,q)}(\Om)$ be the space of $(0,q)$-forms on $\Om$ with square integrable coefficients (for $(0,0)$-forms, i.e., functions, no subscript will be used). Each
such form can be written uniquely as 
$$u=\sideset{}{'}\sum_{J}u_J d\overline{z_J}$$
where $J=(j_1,\dots, j_q)$ is a strictly increasing multi-index, $\sideset{}{'}\sum$ denotes the summation over such indices, and $d\overline{z_J}=d\overline{z_{j_1}}\wedge\dots\wedge d\overline{z_{j_q}}$.

We define the following inner product on $L^2_{(0,q)}(\Om)$,
$$(u,v)=\left(\sideset{}{'}\sum_{J}u_J d\overline{z_J},\sideset{}{'}\sum_{J}v_J d\overline{z_J}\right)=\sideset{}{'}\sum_{J}\int_{\Om}u_J \overline{v_J}dV,$$
under which $L^2_{(0,q)}(\Om)$ is a Hilbert space. We also define the standard $\dbar$-operator (the Cauchy-Riemann operator) on $(0,q)$-forms as
$$\dbar\left(\sideset{}{'}\sum_{J}u_J d\overline{z_J}\right)=\sum_{j=1}^{n}\sideset{}{'}\sum_{J}\frac{\partial u_J}{\partial \overline{z_j}} d\overline{z_j}\wedge d\overline{z_J},$$
where the derivatives are computed as distributional derivatives. We say a form $u\in L^2_{(0,q)}(\Om)$ is in the domain of $\dbar$ if $\dbar u \in L^2_{(0,q+1)}(\Om)$. In this standard setup, the operator $\dbar$ is a closed and densely defined operator from $L^2_{(0,q)}(\Om)$ to $L^2_{(0,q+1)}(\Om)$. Moreover, it has a Hilbert space adjoint that is denoted by $\dbar^{\ast}$.

We define the complex Laplacian (also referred to as the $\dbar$-Neumann Laplacian) on $(0,q)$-forms as
\begin{equation}\label{complexlaplacian}
\Box=\Box_{q}=\dbar\dbar^{\ast}+\dbar^{\ast}\dbar, 
\end{equation}
where each operator is defined at the correct form level and with domain so that the compositions are defined. It is clear that Dom($\Box$) involves two boundary conditions: $u\in$ Dom($\dbar^{\ast}$) and $\dbar u\in$ Dom($\dbar^{\ast}$). The first one is a Dirichlet condition and the second 
one is a Neumann condition.

It is known that $\Box$ has a bounded inverse (that is a solution operator) on bounded pseudoconvex domains. This operator is called the $\dbar$-Neumann operator of
$\Om$ and it is denoted by $N=N_q$ (see also \cite{StraubeBook}).

For $\alpha \in L^2_{(0,q)}(\Om)$ such that $\dbar \alpha=0$, the $\dbar$-problem is to find a form $u \in L^2_{(0,q-1)}(\Om)$ such that 
\begin{equation}\label{dbar}
\dbar u =\alpha.
\end{equation}
By using the machinery above we note that $\dbarstar N \alpha$ is a solution for \eqref{dbar} and moreover this solution has the smallest $L^2$-norm among all the solutions. The operator $\dbarstar N$ on $L^2_{(0,q)}(\Om)$ is called the canonical solution operator.

A function $f\in C^{\infty}(\Om)$ is said to be holomorphic on $\Om$, if $\dbar f=0$ in $\Om$. Let $\mathcal{O}(\Om)$ denote the set of holomorphic functions on $\Om$ and $L^2_a(\Om)$ denote 
the intersection $\mathcal{O}(\Om)\cap L^2(\Om)$. It is a consequence of the Cauchy integral formula that $L^2_a(\Omega)$ is a closed subspace 
of $L^2(\Om)$. Hence, there exists the orthogonal projection operator from $L^2(\Om)$ onto $L^2_a(\Om)$. This projection is called the Bergman projection operator of the domain $\Om$ and denoted by $\BOm$.  

The closure of a domain $\Omega$ in $\Cn$ is said to have a Stein neighborhood basis if for every neighborhood $\mathcal{U}$ of $\overline{\Omega}$ there exists a pseudoconvex domain $\mathcal{W}$ such that $\overline{\Omega}\subset\subset \mathcal{W}\subset\subset \mathcal{U}$.

The Nebenh\"ulle of $\Omega$, denoted by $\mathcal{N}(\Omega)$, is the interior of the intersection of all pseudoconvex domains 
that contain $\overline{\Omega}$. We say $\Omega$ has nontrivial Nebenh\"ulle if $\mathcal{N}(\Omega)\setminus \Omega$ has interior points.

The results in this note concern domains with non-smooth boundary therefore we have to be careful with the definition of the function space $C^{\infty}(\overline{\Omega})$. By $f\in C^{\infty}(\overline{\Omega})$ we mean for any multi-indices $\alpha,\beta$; 
$$\sup_{z\in\Omega}\left|\frac{\partial^{\alpha+\beta}}{\partial z^{\alpha}\partial\overline{z}^{\beta}}f(z)\right| \text{ is finite.}$$
Note that this condition is weaker than requiring $f$ to be smooth in a full neighborhood of $\overline{\Omega}$. We use $A^{\infty}(\Omega)$ to denote the set of holomorphic functions that are in $C^{\infty}(\overline{\Omega})$. We denote the $L^2$ Sobolev spaces of forms by $W^k_{(0,q)}(\Omega)$ for integer values of $k$.

\subsection{Background}
The Hartogs triangle is an example of a pseudoconvex domain with nontrivial Nebenh\"ulle and the worm domain in \cite{DiedFoar77a} is an example of a smooth bounded pseudoconvex domain with nontrivial Nebenh\"ulle.

It is clear that if the Nebenh\"ulle is nontrivial, then the domain cannot have a Stein neighborhood basis. On the other hand, trivial Nebenh\"ulle does not guarantee that $\Omega$ has a Stein neighborhood basis; see \cite[Proposition 1]{Sato80} for an earlier discussion and see \cite{Stensones} for a counterexample.

We refer to \cite{BedfordFornaess78, DiedFoar77a, DiederichFornaess77b, Sibony87, Sibony91, FornaessHerbig08, FornaessNagel, Harrington}
and the references within for results concerning the existence of a Stein neighborhood basis. In particular, some sufficient conditions can be listed as follows.
\begin{enumerate}
\item[(S-1)] Existence of a holomorphic vector field in a neighborhood of $b\Omega$ that is transversal to $b\Omega$ \cite{BedfordFornaess78, FornaessNagel}. 
\item[(S-2)] Smallness of a certain cohomology class \cite{BedfordFornaess78}. 
\item[(S-3)] Property $(P)$ or Property $(\tilde{P})$ \cite{Sibony87}.
\item[(S-4)] Existence of a plurisubharmonic defining function \cite{FornaessHerbig08}.
\end{enumerate}

A related problem in this context is to find sufficient conditions on a bounded domain $\Omega$ in $\Cn$ such that the $\dbar$-Neumann operator $N$, the canonical solution operator $\dbarstar N$ and the Bergman projection operator $\BOm$ of $\Omega$ are exactly regular, i.e., $N$, $\dbarstar N$ and $\BOm$ are bounded on Sobolev spaces $W^k(\Omega)$ (with correct form level) for all $k\geq 0$. Actually, on bounded pseudoconvex domains with smooth boundary if $N_{\Omega}$ is exactly regular (or compact)  then so are $\dbarstar N$ and $\BOm$ \cite{BoasStraube90, StraubeBook}.

The intriguing relationship between the problem of existence of a Stein neighborhood basis and the problem of exact regularity of $N$, $\dbarstar N$ and $\BOm$ rises to the surface when we examine the known sufficient conditions. Namely, each condition (S-1) to (S-4) has a more stringent version that implies the exact regularity of $N$, $\dbarstar N$ and $\BOm$. In particular, compare (S-1) to \cite{BoasStraube91a}, (S-2) to \cite{BoasStraube94}, (S-3) to \cite{BoasStraube91a} and \cite{HerbigMcNeal06}, and (S-4) to \cite{Catlin84}, \cite{Straube97} and \cite{McNeal04}.

In this note, we explore this relationship further on complete Hartogs domains.

\subsection{Hartogs Domains}
In this section, we go over the notion of Nebenh\"ulle on Hartogs domains (see also \cite{ZeytuncuNeben}). Let $\mathbb{D}$ denote the unit disc in $\mathbb{C}$ and let $\psi(z)$ be a continuous and bounded from below function on $\mathbb{D}$. Let us consider the domain $\Omega$ in $\mathbb{C}^2$ defined by; 
\begin{equation}\label{Hdomain}
\Omega=\left\{(z_1,z_2)\in \mathbb{C}^2~|~ z_1\in \mathbb{D}; |z_2|<e^{-\psi(z_1)}\right\}.
\end{equation}

The domain $\Omega$ is a bounded complete Hartogs domain. Moreover, it is known that (see \cite[page 129]{VlaBook}) $\Omega$ is a pseudoconvex domain if and only if $\psi(z)$ is a subharmonic function on $\mathbb{D}.$ In order to focus on pseudoconvex domains; we assume that $\psi(z)$ is a subharmonic function for the rest of the note. We further assume that $\psi(z)$ is smooth in the interior of $\mathbb{D}$.

Let $\mathcal{F}$ be the set of functions $r(z)$ where $r(z)$ is a subharmonic function on a neighborhood of $\overline{\mathbb{D}}$ such that $r(z)\leq\psi(z)$ on $\mathbb{D}$. 
We define the following two functions; 
\begin{align*}
R(z)&=\sup_{r\in\mathcal{F}}\left\{r(z) \right\},\\
R^*(z)&=\limsup_{\mathbb{D}\ni\zeta\to z}R(\zeta).
\end{align*}
Note that $R^{*}(z)$ is upper semicontinuous and subharmonic on $\mathbb{D}$.

The following proposition from \cite[Theorem 1]{Shirinbekov86} gives the description of $\mathcal{N}(\Omega)$ for $\Omega$ a complete Hartogs domain as above.
\begin{proposition}\label{description}
$\mathcal{N}(\Omega)=\left\{(z_1,z_2)\in \mathbb{C}^2~|~ z_1\in \mathbb{D}; |z_2|<e^{-R^{*}(z_1)}\right\}.$
\end{proposition}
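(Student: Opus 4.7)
Write $\widetilde{\Omega}$ for the Hartogs domain on the right-hand side. The plan is to prove the two containments $\widetilde{\Omega}\subseteq\mathcal{N}(\Omega)$ and $\mathcal{N}(\Omega)\subseteq\widetilde{\Omega}$ separately. Note first that any constant $c\leq\inf_{\mathbb{D}}\psi$ is a valid element of $\mathcal{F}$, so $R\geq c>-\infty$ and $R^{*}$ is a bounded subharmonic function on $\mathbb{D}$ dominated by $\psi$; in particular $\widetilde{\Omega}$ itself is a bounded pseudoconvex complete Hartogs domain containing $\Omega$.

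For the containment $\widetilde{\Omega}\subseteq\mathcal{N}(\Omega)$, fix any pseudoconvex open $U\supset\overline{\Omega}$ and pass to its complete Hartogs hull in the $z_{2}$-direction,
\[
U^{\#}=\{(z_{1},z_{2})\in\mathbb{C}^{2}:\{z_{1}\}\times\{w\in\mathbb{C}:|w|\leq|z_{2}|\}\subset U\}.
\]
A standard tube-neighborhood argument shows $U^{\#}$ is open, and it is pseudoconvex because it equals $\bigcap_{\theta\in[0,2\pi],\,t\in[0,1]}\mu_{\theta,t}^{-1}(U)$ with $\mu_{\theta,t}(z_{1},z_{2})=(z_{1},te^{i\theta}z_{2})$, an intersection of preimages of a pseudoconvex set under holomorphic maps. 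Since $\overline{\Omega}$ is complete Hartogs we have $\overline{\Omega}\subset U^{\#}\subset U$. If $\rho_{U}(z_{1})$ denotes the Hartogs radius of $U^{\#}$ over $z_{1}$, the classical Hartogs characterization gives $-\log\rho_{U}$ subharmonic on an open neighborhood of $\overline{\mathbb{D}}$, and the inclusion $\overline{\Omega}\subset U^{\#}$ forces $-\log\rho_{U}\leq\psi$ on $\mathbb{D}$. Thus $-\log\rho_{U}\in\mathcal{F}$, so $-\log\rho_{U}\leq R\leq R^{*}$, i.e.\ $\rho_{U}\geq e^{-R^{*}}$ on $\mathbb{D}$, which is exactly $\widetilde{\Omega}\subset U^{\#}\subset U$. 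Since $\widetilde{\Omega}$ is open and $U$ was arbitrary, $\widetilde{\Omega}\subseteq\mathcal{N}(\Omega)$.

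For the reverse inclusion, to each pair $r\in\mathcal{F}$ (subharmonic on an open $V_{r}\supset\overline{\mathbb{D}}$) and $\delta>0$ I attach the pseudoconvex Hartogs domain $U_{r,\delta}=\{(z_{1},z_{2}):z_{1}\in V_{r},\ |z_{2}|<e^{-r(z_{1})+\delta}\}$, which contains $\overline{\Omega}$ because $r-\delta<\psi$ on $\mathbb{D}$. Hence $\mathcal{N}(\Omega)\subseteq\mathrm{int}\bigcap_{r,\delta}U_{r,\delta}$. Letting $\delta\downarrow 0$ forces $|z_{2}|\leq e^{-R(z_{1})}$, and allowing $r$ to be a constant on an arbitrarily small open neighborhood of $\overline{\mathbb{D}}$ forces $z_{1}\in\overline{\mathbb{D}}$, so $\bigcap_{r,\delta}U_{r,\delta}\subseteq\{z_{1}\in\overline{\mathbb{D}},\ |z_{2}|\leq e^{-R(z_{1})}\}$. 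A short calculation with the upper semicontinuous regularization identifies the interior of this set with $\widetilde{\Omega}$: a point $(z_{1}^{0},z_{2}^{0})$ is interior precisely when $R(z_{1})<-\log|z_{2}^{0}|$ for $z_{1}$ in a full neighborhood of $z_{1}^{0}$, equivalently $R^{*}(z_{1}^{0})<-\log|z_{2}^{0}|$, which is the defining inequality of $\widetilde{\Omega}$.

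The single substantial ingredient is the assertion that $U^{\#}$ is pseudoconvex, equivalently that its Hartogs radius has plurisubharmonic logarithm. This is where the pseudoconvexity of $U$ really enters, and it is essentially the classical result that the complete Hartogs hull of a pseudoconvex domain is pseudoconvex; it may be seen either via the rotational-intersection description above or directly from the Kontinuit\"atssatz. Everything else in the plan amounts to bookkeeping with subharmonic envelopes and upper semicontinuous regularizations.
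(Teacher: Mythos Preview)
The paper does not prove this proposition at all: it is quoted verbatim from \cite[Theorem~1]{Shirinbekov86}. So there is no ``paper's proof'' to compare your argument against, and your write-up is effectively an attempt to supply what the paper omits.

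Your first inclusion $\widetilde{\Omega}\subseteq\mathcal{N}(\Omega)$ is clean and correct: passing from an arbitrary pseudoconvex $U\supset\overline{\Omega}$ to its complete Hartogs hull $U^{\#}$, reading off the Hartogs radius $\rho_U$, and observing that $-\log\rho_U\in\mathcal{F}$ is exactly the right mechanism, and the rotational-intersection description of $U^{\#}$ gives pseudoconvexity without fuss.

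The second inclusion has a genuine, if technical, gap. You assert that $U_{r,\delta}\supset\overline{\Omega}$ ``because $r-\delta<\psi$ on $\mathbb{D}$''. That inequality only handles points $(z_1,z_2)\in\overline{\Omega}$ with $z_1\in\mathbb{D}$. For $z_1\in\partial\mathbb{D}$ the fibre of $\overline{\Omega}$ is $\{|z_2|\le\limsup_{\mathbb{D}\ni w\to z_1}e^{-\psi(w)}\}$, so what you actually need is
\[
r(z_1)\;<\;\liminf_{\mathbb{D}\ni w\to z_1}\psi(w)+\delta ,
\]
and this does \emph{not} follow from $r\le\psi$ on $\mathbb{D}$ alone: $r$ is merely upper semicontinuous, so $r(z_1)=\limsup_{w\to z_1}r(w)$ may pick up contributions from $w\notin\mathbb{D}$, where you have no control. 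If this containment fails for small $\delta$, the corresponding $U_{r,\delta}$ is not a pseudoconvex neighbourhood of $\overline{\Omega}$ and cannot be used; then your intersection is taken over a smaller family and no longer obviously cuts down to $\{|z_2|\le e^{-R(z_1)}\}$. One standard repair is to replace each $r\in\mathcal{F}$ by a \emph{continuous} subharmonic competitor (e.g.\ via mollification on a slightly larger disc) before forming $U_{r,\delta}$; then the boundary estimate is immediate, but you must also check that the supremum over the continuous subfamily still has upper regularization $R^{*}$, which needs a short separate argument.
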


This description does not give much information about the interior of the set difference $\mathcal{N}(\Omega)\setminus \Omega$ or the existence of a Stein neighborhood basis. The Hartogs triangle is a Hartogs domain ($\psi$ is not continuous) with nontrivial Nebenh\"ulle. The continuity assumption on $\psi$ is not enough to avoid having nontrivial Nebenh\"ulle as seen in the following example from \cite{Diederich98}.

\noindent \textbf{Example.} Consider a sequence of points in $\mathbb{D}$ that accumulates at every boundary point of $\mathbb{D}$, and let $f$ be a nonzero holomorphic function on $\mathbb{D}$ that vanishes on this sequence. 
The function defined by $\psi(z)=|f(z)|^2$ is a subharmonic function and $\Omega$, defined as above for this particular $\psi$, 
is a pseudoconvex domain. On the other hand, any pseudoconvex domain that compactly contains $\Omega$ has to contain the closure of the unit polydisc 
$\mathbb{D}\times\mathbb{D}$. Therefore, $\mathcal{N}(\Omega)\setminus \Omega$ has nonempty interior.

This example suggests we must impose additional conditions on $\psi$ or $\Omega$ to have trivial Nebenh\"ulle. The following is an example of a positive result. 

\begin{theorem}[\cite{ZeytuncuNeben}]\label{former}
Suppose $\Omega=\left\{(z_1,z_2)\in \mathbb{C}^2~|~ z_1\in \mathbb{D}; |z_2|<e^{-\psi(z_1)}\right\}$ is a smooth bounded pseudoconvex complete Hartogs domain. 
Then $\mathcal{N}(\Omega)=\Omega$, in particular $\Omega$ has trivial Nebenh\"ulle.
\end{theorem}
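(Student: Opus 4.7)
My plan is to reduce the statement, via Proposition~\ref{description}, to the single identity $R^{*}(z)=\psi(z)$ on $\mathbb{D}$. Once this is established, $\mathcal{N}(\Omega)$ is precisely $\{(z_1,z_2):z_1\in\mathbb{D},\,|z_2|<e^{-\psi(z_1)}\}=\Omega$. The inequality $R^{*}(z)\leq \psi(z)$ is essentially free: every $r\in\mathcal{F}$ satisfies $r\leq \psi$ on $\mathbb{D}$, so $R\leq \psi$, and passing to the upper semicontinuous regularization together with the continuity of $\psi$ yields $R^{*}\leq \psi$.

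For the reverse inequality, I first extract a geometric consequence of the hypothesis that $\Omega$ has smooth boundary. If $\psi$ stayed bounded as $z_1\to b\mathbb{D}$, the boundary of $\Omega$ would include a non-smooth edge along the torus $\{|z_1|=1,\,|z_2|=e^{-\psi(z_1)}\}$, contradicting smoothness of $b\Omega$. Hence $\psi(z)\to +\infty$ as $z\to b\mathbb{D}$, so that for every constant $A$ the sublevel set $\{\psi<A\}$ is relatively compact in $\mathbb{D}$. I would make this implication the first step of the proof; it is the only place the smoothness assumption enters.

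Given this, for each $z_0\in\mathbb{D}$ I would construct an explicit element of $\mathcal{F}$ attaining the value $\psi(z_0)$ at $z_0$. Fix $A>\psi(z_0)$ and set
\[
r_A(z)=\begin{cases}\min(\psi(z),A), & z\in\mathbb{D},\\ A, & z\in\mathbb{C}\setminus\mathbb{D}.\end{cases}
\]
The key observation is that $\psi>A$ on some open neighborhood of $b\mathbb{D}$ inside $\mathbb{D}$, so $r_A\equiv A$ on a full open neighborhood of $b\mathbb{D}$ and the two pieces glue seamlessly. On $\mathbb{D}$, $r_A$ is the minimum of two subharmonic functions and thus subharmonic; off $\overline{\mathbb{D}}$ it is constant; and across $b\mathbb{D}$ it is locally constant. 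Hence $r_A$ is subharmonic on a neighborhood of $\overline{\mathbb{D}}$, satisfies $r_A\leq \psi$ on $\mathbb{D}$, and by the choice of $A$ satisfies $r_A(z_0)=\psi(z_0)$. So $r_A\in\mathcal{F}$ and $R(z_0)\geq \psi(z_0)$.

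Combining the two directions gives $R=\psi$ on $\mathbb{D}$, and then $R^{*}=\psi$ by continuity of $\psi$, which closes the argument via Proposition~\ref{description}. The only nontrivial step I anticipate is the geometric one, namely carefully deducing $\psi\to +\infty$ at $b\mathbb{D}$ from the smoothness of $b\Omega$; once that is secured, the construction of the competitor $r_A$ is elementary and the rest is bookkeeping.
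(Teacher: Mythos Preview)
Your reduction via Proposition~\ref{description} to the identity $R^{*}=\psi$ is natural, and the claim that smoothness of $b\Omega$ forces $\psi(z)\to+\infty$ as $z\to b\mathbb{D}$ is correct. The construction of the competitor $r_A$, however, contains a genuine error: the minimum of two subharmonic functions is \emph{not} subharmonic in general---it is the \emph{maximum} that is preserved under this operation. Concretely, take $\psi(z)=-\log(1-|z|^2)$ and any $A>0$. At a point $z_0$ with $\psi(z_0)=A$, the circular mean of $\min(\psi,A)$ over any small circle about $z_0$ is strictly less than $A$ (the arc inside $\{|z|<|z_0|\}$ contributes values $<A$, the rest contributes exactly $A$), so the sub--mean value inequality fails and $r_A$ is not subharmonic near $z_0$. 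Hence $r_A\notin\mathcal{F}$, and the inequality $R(z_0)\geq\psi(z_0)$ remains unproved. There is no evident one-line repair: producing, for each $z_0\in\mathbb{D}$, a function subharmonic on a neighborhood of $\overline{\mathbb{D}}$ that lies below $\psi$ on $\mathbb{D}$ yet matches $\psi(z_0)$ is precisely the nontrivial content you are trying to establish, and the exhaustion property of $\psi$ alone does not furnish such a function.

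For comparison, the paper's argument (reproduced inside the proof of Theorem~\ref{main}, which the author states repeats the proof from \cite{ZeytuncuNeben}) is entirely different and never attempts to compute $R^{*}$. It argues by contradiction: if $\mathcal{N}(\Omega)\supsetneq\Omega$, then Lemmas~\ref{uniform} and~\ref{approximation} force every function in $A^{\infty}(\Omega)$ to extend holomorphically across a full neighborhood of some boundary point, while smoothness of $b\Omega$ permits, via \cite{Catlin80}, the construction of a function in $A^{\infty}(\Omega)$ that extends past no boundary point. The smoothness hypothesis is thus consumed by Catlin's existence theorem rather than by any exhaustion property of $\psi$.
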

Note that the smoothness assumption on the domain $\Omega$ is a stronger condition than the smoothness assumption on the function $\psi(z)$.

In the this note, we prove two results on Hartogs domains $\Om$ that ensure $\mathcal{N}(\Om)=\Om$ under the assumptions of exact regularity of canonical operators.

\subsection{Results} The first result holds on any Hartogs  domain defined by \eqref{Hdomain}.

\begin{theorem}\label{main} 
Let $\Omega=\left\{(z_1,z_2)\in \mathbb{C}^2~|~ z_1\in \mathbb{D}; |z_2|<e^{-\psi(z_1)}\right\}$ where $\psi(z)$ is a smooth bounded below subharmonic function on $\mathbb{D}$. Suppose that $\dbarstar N_1$ maps $C^{\infty}_{(0,1)}(\overline{\Omega})$ to $C^{\infty}(\overline{\Omega})$ then $\mathcal{N}(\Omega)=\Omega$, in particular $\Omega$ has trivial Nebenh\"ulle.
\end{theorem}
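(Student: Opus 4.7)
The plan is to argue by contradiction: assuming $\mathcal{N}(\Omega) \supsetneq \Omega$, Proposition \ref{description} produces a point $z_0 \in \mathbb{D}$ with $R^*(z_0) < \psi(z_0)$, and I would use the regularity hypothesis to manufacture an element of $\mathcal{F}$ whose value at $z_0$ meets $\psi(z_0)$, contradicting that $R^*$ is the upper envelope of $\mathcal{F}$.

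The first step is to upgrade the hypothesis on $\dbarstar N_1$ to a boundary regularity statement for the Bergman projection. Kohn's formula $\BOm = I - \dbarstar N_1 \dbar$, applied to $f \in C^\infty(\overline{\Omega})$, together with the hypothesis, immediately yields $\BOm : C^\infty(\overline{\Omega}) \to A^\infty(\Omega)$. Next I exploit the Hartogs structure: the Bergman space decomposes orthogonally as
\[
L^2_a(\Omega) \;=\; \bigoplus_{n \ge 0}\, z_2^n \cdot L^2_a\!\left(\mathbb{D},\, \tfrac{\pi}{n+1}\, e^{-2(n+1)\psi(z_1)}\, dA\right),
\]
and $\BOm$ acts as the weighted Bergman projection $P_n$ on the $n$-th summand. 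For any $g \in C^\infty(\overline{\mathbb{D}})$, the function $g(z_1) z_2^n$ lies in $C^\infty(\overline{\Omega})$, so $\BOm(g z_2^n) = (P_n g)(z_1) z_2^n \in A^\infty(\Omega)$. Recovering $P_n g$ by applying $\tfrac{1}{n!}\partial_{z_2}^n$ and evaluating on the slice $\{z_2 = 0\} \subset \Omega$ then gives $P_n g \in A^\infty(\mathbb{D})$ for every $n \ge 0$.

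With this family of boundary-regular weighted projections in hand, I would build a subharmonic minorant of $\psi$ that extends across $\partial \mathbb{D}$. Natural candidates are normalized logarithms $\tfrac{1}{n+1} \log|h_n|$ with $h_n = P_n g_n$ for test functions $g_n \in C^\infty(\overline{\mathbb{D}})$, for instance smoothed point masses or extremal test functions tied to the weighted Bergman kernel $K_n$ on the diagonal. Each $\tfrac{1}{n+1} \log|h_n|$ is subharmonic on $\mathbb{D}$, and the $A^\infty(\mathbb{D})$ membership supplies boundary control. The diagonal asymptotics $\tfrac{1}{2(n+1)} \log K_n(z,z) \to \psi(z)$ should then let me extract a subharmonic function realizing $\psi$ in the upper envelope defining $R^*$, contradicting $R^*(z_0) < \psi(z_0)$.

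The main obstacle I foresee is this final step: membership in $A^\infty(\mathbb{D})$ gives regularity up to $\partial \mathbb{D}$, but the family $\mathcal{F}$ demands honest subharmonic extension \emph{across} $\partial\mathbb{D}$. Individual $\tfrac{1}{n+1}\log|h_n|$ need not extend in that stronger sense, so one must work with envelopes or subsequential limits of these functions and carefully use the smoothness of $\psi$ in the interior, together with the asymptotic behavior of the weighted Bergman kernels $K_n$, to ensure that the resulting minorant extends past $\overline{\mathbb{D}}$ as a subharmonic function and hence lies in $\mathcal{F}$.
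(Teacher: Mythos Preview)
Your reduction to weighted projections $P_n : C^\infty(\overline{\mathbb{D}}) \to A^\infty(\mathbb{D})$ via Kohn's formula and the Hartogs decomposition is correct, but the final step---producing an element of $\mathcal{F}$ from this information---is a genuine gap, and you are right to flag it. Membership in $A^\infty(\mathbb{D})$ gives nothing across $\partial\mathbb{D}$: the example in the paper (the Hartogs domain built from $\psi=|f|^2$ with $f$ vanishing on a sequence accumulating at every point of $\partial\mathbb{D}$) shows that $\psi$ can be smooth on $\mathbb{D}$ while every subharmonic $r\le\psi$ defined on a neighborhood of $\overline{\mathbb{D}}$ satisfies $r\le 0$ on $\mathbb{D}$. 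Your envelope strategy therefore needs a mechanism that genuinely distinguishes the regular case from this one, and ``$h_n\in A^\infty(\mathbb{D})$'' does not obviously supply it. The Bergman-kernel asymptotic $\tfrac{1}{2(n+1)}\log K_n(z,z)\to\psi(z)$ is an interior statement; it says nothing about extension across $\partial\mathbb{D}$, and no amount of envelope-taking will manufacture a subharmonic function on a neighborhood of $\overline{\mathbb{D}}$ from data defined only on $\mathbb{D}$.

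The paper sidesteps this by working on $\Omega$ itself rather than descending to $\mathbb{D}$. It fixes a smooth pseudoconvex Hartogs subdomain $\Omega_\delta\subset\Omega$ sharing boundary with $\Omega$ over $|z_1|<1-\delta$, takes (via Catlin) a function $f\in A^\infty(\Omega_\delta)$ that extends past no boundary point, and uses the regularity of $\dbarstar N_1$ directly---not through Kohn's formula---to solve $\dbar u_{q_1}=-\dbar F/(z_1-q_1)$ in $C^\infty(\overline{\Omega})$, producing $G_{q_1}=F+(z_1-q_1)u_{q_1}\in A^\infty(\Omega)$ with $G_{q_1}(q_1,\cdot)=f(q_1,\cdot)$. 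The crux is then an approximation lemma: every $h\in A^\infty(\Omega)$ is a uniform limit on $\overline{\Omega}$ of functions holomorphic in a neighborhood of $\overline{\Omega}$ (truncate the $z_2$-expansion and dilate in $z_1$), and such functions extend to $\mathcal{N}(\Omega)$ with sup-norm control by a maximum-principle argument. Hence every $A^\infty(\Omega)$ function extends holomorphically to $\mathcal{N}(\Omega)$; applying this to the $G_{q_1}$ extends each slice $f(q_1,\cdot)$ past the fiber boundary, and Hartogs' joint analyticity then contradicts the non-extendability of $f$. The upshot is that the regularity hypothesis is most naturally exploited through the extension property of $A^\infty(\Omega)$ to $\mathcal{N}(\Omega)$, not through the family $\mathcal{F}$.
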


\begin{remark}
If $N_1$ maps $C^{\infty}_{(0,1)}(\overline{\Omega})$ to $C^{\infty}_{(0,1)}(\overline{\Omega})$ then it is clear that $\dbarstar N_1$ has the desired property above since $\dbarstar$ is a first order differential operator. On the other hand, there are domains where $N_1$ fails to be regular but $\dbarstar N_1$ maps $C^{\infty}_{(0,1)}(\overline{\Omega})$ to $C^{\infty}(\overline{\Omega})$ \cite{EhsaniBidisc, EhsaniProduct}.
\end{remark}

\begin{remark}
Theorem \ref{main} is still true if the canonical solution operator $\dbarstar N_1$ is replaced by any other solution operator for the $\dbar$-problem. This resonates with the results in \cite{Chaumat} and \cite{Dufresnoy} where it is shown that existence of a Stein neighborhood basis implies existence of a regular solution operator for the $\dbar$-problem. 
\end{remark}

\begin{remark}
Theorem \ref{main} implies that for the domain constructed in the example above; the $\dbar$-Neumann operator $N_1$, the canonical solution operator $\dbarstar N_1$, or any solution operator for the $\dbar$-problem fails to be globally regular. 
\end{remark}

The second result concerns a special family of Hartogs domains. We take a bounded holomorphic function $g$ on $\mathbb{D}$ and define the following complete Hartogs domain by using this function,
\begin{equation}\label{domain}
\Omega_{g}=\left\{(z_1,z_2)\in \mathbb{C}^2~|~z_1\in \mathbb{D} \text{ and } |z_2|<|g(z_1)|\right\}.
\end{equation}
Note that here $\psi(z)=-\log |g(z)|$. Before we state the second result we observe two things.

\begin{remark}
If $g$ is constant then $\Omega_g$ is a bidisc and the closure of a bidisc admits a Stein neighborhood basis and the Bergman projection of a bidisc is exactly regular. 
\end{remark}

\begin{remark}
If $g$ vanishes at a point in $\mathbb{D}$ then $\overline{\Omega_g}$ does not admit a Stein neighborhood basis and $\BOmg$ is not bounded on Sobolev spaces. Indeed, $\Omega_g$ behaves like the famous Hartogs triangle at a zero of $g$ and these two properties fail around this point. See the last section for details. 
\end{remark}

The remaining case is when $g$ is nonconstant and has no zeros in $\mathbb{D}$. In this case, we notice that the domain does not necessarily have to admit a Stein neighborhood basis. In particular as in the example above, let us take a holomorphic function $h(z)$ on $\mathbb{D}$ such that the zero set of $h$, $\left\{z\in\mathbb{D}~|~h(z)=0\right\}$, does not have any accumulation point in the interior of $\mathbb{D}$ but it accumulates at every boundary point of $\mathbb{D}$ and also $|h(z)|<1$ on $\mathbb{D}$. We can use Blaschke products to construct such a function. Then, let $g(z)=1+h(z)$ and consider the domain $\Omega_g$. We observe that $\overline{\Omega_g}$ contains $b\mathbb{D}\times\mathbb{D}$ and therefore any pseudoconvex neighborhood of 
$\overline{\Omega_g}$ also contains $\mathbb{D}\times\mathbb{D}$. This shows, for this particular choice of $g$, the closure of the domain $\Omega_g$ does not admit a Stein neighborhood basis. 

On the other hand, if we assume the regularity of $\BOmg$ on Sobolev spaces then we get the existence of a Stein neighborhood basis.

\begin{theorem}\label{Sobolev} 
Let $g$ be a nonconstant nonvanishing bounded holomorphic function on $\mathbb{D}$.  
Suppose $\BOmg$ is bounded from $W^k(\Omega_g)$ to itself for all integers $k\geq 0$. Then the closure of $\Omega_g$ admits a Stein neighborhood basis. 
\end{theorem}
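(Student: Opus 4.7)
The plan is to extract the smoothness of $g$ up to $b\mathbb{D}$ from the hypothesis on $\BOmg$, and then to use that smoothness to write down an explicit Stein neighborhood basis for $\overline{\Om_g}$ by dilating $g$.

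First I would exploit the complete Hartogs structure. The space $\LOm$ with $\Om=\Om_g$ decomposes orthogonally into Fourier modes in the rotation angle of $z_2$, and $\BOmg$ preserves this decomposition. The zeroth mode of the Bergman space is $L^2_a(\mathbb{D},\pi|g|^2\,dA)$; since $g$ is nonvanishing on the simply connected disc, the map $h\mapsto h/g$ is a unitary isomorphism from $L^2_a(\mathbb{D})$ onto this weighted space. A short calculation then yields the identity
\[
\BOmg\bigl(\phi(z_1)\rho(|z_2|)\bigr)(z_1) \;=\; \frac{c}{\pi\,g(z_1)}\,B_{\mathbb{D}}\bigl(\phi/\bar g\bigr)(z_1),
\]
valid for $\phi\in C^\infty_c(\mathbb{D})$ and $\rho\in C^\infty_c([0,\delta))$ with $\delta<\min_{\mathrm{supp}\,\phi}|g|$, where $c=2\pi\int_0^\delta r\rho(r)\,dr$ and $B_{\mathbb{D}}$ is the Bergman projection on the disc. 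Picking a radial $\eta\in C^\infty_c(\mathbb{D})$ with $\int\eta\,dA=\pi$ (so that $B_{\mathbb{D}}(\eta)=1$) and setting $\phi:=\eta\bar g\in C^\infty_c(\mathbb{D})$, the identity collapses to $\BOmg(\phi\rho)(z_1)=c/(\pi g(z_1))$. Because this output is independent of $z_2$, the hypothesis that $\BOmg$ is bounded on every $W^k(\Om_g)$---which via Sobolev embedding yields Condition R for $\BOmg$---forces $1/g\in A^\infty(\mathbb{D})$. In particular $|g|$ is bounded below on $\mathbb{D}$, and combined with the assumed boundedness of $g$ this gives $g\in A^\infty(\mathbb{D})$.

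Next I would construct the Stein neighborhood basis explicitly. For small $\epsilon>0$ let $g_\epsilon(z):=g(z/(1+\epsilon))$, holomorphic and nonvanishing on the dilated disc $(1+\epsilon)\mathbb{D}$, and set
\[
\mathcal{W}_\epsilon := \bigl\{(z_1,z_2)\in\mathbb{C}^2 \;:\; |z_1|<1+\epsilon,\ |z_2|<(1+\epsilon)^C\,|g_\epsilon(z_1)|\bigr\},
\]
where $C$ is chosen larger than twice the Lipschitz constant of $\log|g|$ on $\overline{\mathbb{D}}$ (finite by the previous step). Since $-\log\bigl((1+\epsilon)^C|g_\epsilon|\bigr)$ is harmonic on $(1+\epsilon)\mathbb{D}$, the complete Hartogs domain $\mathcal{W}_\epsilon$ is pseudoconvex. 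The Lipschitz bound on $\log|g|$ guarantees $\overline{\Om_g}\subset\mathcal{W}_\epsilon$ for all sufficiently small $\epsilon$, and visibly $\mathcal{W}_\epsilon\to\overline{\Om_g}$ as $\epsilon\to 0^+$, producing the desired Stein neighborhood basis.

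The main technical point is the passage from $W^k$-boundedness of $\BOmg$ to pointwise smoothness on $\overline{\mathbb{D}}$ of the specific output $c/(\pi g)$. Because this output is constant in $z_2$, the question reduces to a one-variable statement on the disc and thereby bypasses delicate issues about Sobolev embedding near the non-smooth corner of $\Om_g$.
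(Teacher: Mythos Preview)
Your overall architecture matches the paper's: produce a smooth test function whose Bergman projection is a nonzero constant times $1/g$, deduce regularity of $1/g$ up to $b\mathbb{D}$, and then build the Stein neighborhood basis by dilating $g$. Your version of the key identity (your formula for $\BOmg(\phi\rho)$) is correct and is essentially the paper's Lemma~\ref{rep}, and your dilation construction at the end is the same as the paper's $\Omega_\epsilon$.

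The gap is the sentence ``which via Sobolev embedding yields Condition~R for $\BOmg$\,---\,forces $1/g\in A^\infty(\mathbb{D})$.'' You cannot invoke Sobolev embedding on $\Omega_g$: the boundary of $\Omega_g$ is not known a priori to be Lipschitz (or even to have the extension property), since at this stage $g$ is merely bounded and holomorphic on $\mathbb{D}$ with no control near $b\mathbb{D}$. Your last paragraph proposes to sidestep this by noting that the output $c/g$ is constant in $z_2$, but that reduction does \emph{not} land you in standard Sobolev spaces on $\mathbb{D}$: for a $z_2$-independent function, membership in $W^k(\Omega_g)$ is exactly
\[
\int_{\mathbb{D}}\Bigl|\partial_{z_1}^k\bigl(1/g\bigr)\Bigr|^2\,|g(z_1)|^2\,dA(z_1)<\infty,
\]
a \emph{weighted} condition with weight $|g|^2$ that may degenerate at $b\mathbb{D}$. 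So no standard one-variable Sobolev embedding applies either.

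The paper fills this gap with a two-step bootstrap. From the $k=1$ estimate one gets $g'/g\in L^2(\mathbb{D})$; writing $g=e^h$, Cauchy--Schwarz on the Taylor coefficients of $h$ gives $|h(z)|\lesssim \log\frac{1}{1-|z|^2}$, hence a polynomial lower bound $|g(z)|\geq C(1-|z|^2)^M$. Feeding this back into the weighted estimates replaces the unknown weight $|g|^2$ by $(1-|z|^2)^{2M}$, and then an elementary Taylor-coefficient computation shows $n^j|b_n|\to 0$ for every $j$ (where $1/g=\sum b_n z^n$), giving $1/g\in A^\infty(\mathbb{D})$. Once you insert this argument in place of the Sobolev-embedding appeal, your proof goes through and coincides with the paper's.
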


\begin{remark}
It will be clear in the proof of Theorem \ref{Sobolev} that a weaker hypothesis, namely continuity from $C^{\infty}(\overline{\Omega_g})$ to itself, would suffice. 
\end{remark}

\section{Proof of Theorem \ref{main}} 

The proof builds on the proof of Theorem \ref{former} in \cite{ZeytuncuNeben}. For the convenience of the reader, we repeat the arguments from \cite{ZeytuncuNeben} here and highlight the new ingredients in this proof. The first difference to mention is that the boundary smoothness assumption in \cite[Theorem 5]{ZeytuncuNeben} is replaced by the assumption that the canonical solution operator is globally regular.

We start as in \cite{ZeytuncuNeben} and we suppose that $\mathcal{N}(\Omega)\not=\Omega$. Then, we take a point $p=(p_1,p_2)\in \mathcal{N}(\Omega)\setminus \Omega$ and notice that actually the set difference contains more points than this singleton. Namely, by Proposition \ref{description}, $R^{*}(p_1)<\psi(p_1)$ and by semicontinuity of $R^{*}$ and continuity of $\psi$; 
there exists a neighborhood $\mathcal{U}$ of $p_1$ (compactly contained in $\mathbb{D}$) such that for all $q_1\in \mathcal{U}$, $R^{*}(q_1)<\psi(q_1)$. This neighborhood $\mathcal{U}$ guarantees that $\mathcal{N}(\Omega)$ contains a full $\mathbb{C}^2$ neighborhood of the boundary point $(p_1,e^{-\psi(p_1)}) \in b\Omega$.

Note that there exists $\delta>0$ such that $\mathcal{U}$ is contained in the set $\left\{(z_1,z_2)\in\mathbb{C}^2~|~|z_1|<1-3\delta\right\}$. Also note that we can add an appropriate function to $\psi(z)$ to construct a pseudoconvex complete Hartogs domain $\Omega_{\delta}$ with smooth boundary such that $\Omega_{\delta}\subset \Omega$ and they share the same boundary over $|z_1|<1-\delta$. We will need a smooth cut-off function $\chi_{\delta}(z_1)$ that is radially symmetric on $\mathbb{D}$, supported in $|z_1|<1-2\delta$ and identically 1 on $|z_1|<1-3\delta$.

Let $f(z_1,z_2)\in A^{\infty}(\Omega_{\delta})$ with the property that $f$ does not extend past any boundary point of $\Omega_{\delta}$; existence of such a function is guaranteed by \cite{Catlin80}. First, we extend this function to $\Omega$, 
\begin{equation*}
F(z_1,z_2)=\left\{
     \begin{array}{ll}
       f(z_1,z_2)\chi_{\delta}(z_1) &: (z_1,z_2) \in \Omega_{\delta}\\
       0 &:  (z_1,z_2)\in \Omega\setminus \Omega_{\delta}
     \end{array}
   \right. 
\end{equation*}
as a smooth function. Note that $F(z_1,z_2)\equiv f(z_1,z_2)$ for $|z_1|<1-3\delta$ and $F\in C^{\infty}({\overline{\Omega}})$.

Next, for any $q_1\in \mathcal{U}$ we define $u_{q_1}$ as 
\begin{equation*}
u_{q_1}(z_1,z_2)=\dbarstar N\left(\frac{-\dbar F (z_1,z_2)}{z_1-q_1}\right). 
\end{equation*}
The assumption on $\dbarstar N$ ensures that $u_{q_1}\in C^{\infty}(\overline{\Omega})$. By using this function we define $G_{q_1}(z_1,z_2)$ as 
\begin{equation*}
G_{q_1}(z_1,z_2)=F(z_1,z_2)+(z_1-q_1)u_{q_1}(z_1,z_2). 
\end{equation*}
Note that $G_{q_1}\in A^{\infty}(\Omega)$ and $G_{q_1}(q_1,z_2)=f(q_1,z_2)$ for all $|z_2|<e^{-\psi(q_1)}$. In the remaining part of the proof, we show that $G_{q_1}(z_1,z_2)$ extends to be a holomorphic function on the larger set $\mathcal{N}(\Omega)$ by using two lemmas from \cite{ZeytuncuNeben}. We present proofs of the lemmas for the convenience of the reader.

\begin{lemma}\label{uniform}
Suppose $p\in \mathcal{N}(\Omega)$ and $h$ is a function that is holomorphic in a neighborhood of $\overline{\Omega}$. Then $h$ has a holomorphic extension to $\mathcal{N}(\Omega)$ and $|h(p)|\leq\sup_{q\in \Omega}|h(q)|.$
\end{lemma}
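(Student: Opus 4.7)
The plan is to combine the Hartogs series expansion of $h$ with the definition of the family $\mathcal{F}$ used to construct $R$. First I would expand
\[ h(z_1,z_2)=\sum_{n\geq 0} a_n(z_1)\,z_2^n. \]
Because $h$ is holomorphic on an open neighborhood $\mathcal{V}$ of $\overline{\Omega}$, and $\overline{\mathbb{D}}\times\{0\}\subset\overline{\Omega}$, each coefficient $a_n(z_1)=\tfrac{1}{2\pi i}\oint_{|w|=r} h(z_1,w)\,w^{-n-1}\,dw$ (for a sufficiently small $r$) is holomorphic on an open neighborhood of $\overline{\mathbb{D}}$. Setting $M:=\sup_{q\in\Omega}|h(q)|$, Cauchy estimates on the slice disks of $\Omega$ give $|a_n(z_1)|\leq M\,e^{n\psi(z_1)}$ on $\mathbb{D}$.

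The next step is to feed these estimates through $\mathcal{F}$. For each $n\geq 1$, the function
\[ u_n(z_1):=\tfrac{1}{n}\bigl(\log|a_n(z_1)|-\log M\bigr) \]
is subharmonic on a neighborhood of $\overline{\mathbb{D}}$ and satisfies $u_n\leq \psi$ on $\mathbb{D}$, so $u_n\in\mathcal{F}$ and hence $u_n\leq R\leq R^{*}$ on $\mathbb{D}$. Exponentiating yields $|a_n(z_1)|\leq M\,e^{nR^{*}(z_1)}$. By Proposition~\ref{description}, for any $p=(p_1,p_2)\in\mathcal{N}(\Omega)$ we have $|p_2|\,e^{R^{*}(p_1)}<1$, so the series $\sum_n a_n(p_1)\,p_2^n$ converges geometrically. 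This both defines the holomorphic extension of $h$ to $\mathcal{N}(\Omega)$ and produces the crude bound $|h(p)|\leq M/(1-|p_2|e^{R^{*}(p_1)})$.

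The crude bound is strictly larger than $M$, so to sharpen it I would apply the same argument to $h^k$ for each integer $k\geq 1$. Since $h^k$ is holomorphic on $\mathcal{V}$ with $\sup_\Omega|h^k|=M^k$, repeating the two steps above gives
\[ |h(p)|^k=|h^k(p)|\leq \frac{M^k}{1-|p_2|e^{R^{*}(p_1)}}, \]
so $|h(p)|\leq M\,(1-|p_2|e^{R^{*}(p_1)})^{-1/k}$. Letting $k\to\infty$ collapses the correction factor to $1$ and yields the sharp inequality $|h(p)|\leq M$.

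I expect the main obstacle to be the bookkeeping at the second step: one needs $a_n$ to be genuinely holomorphic on an open neighborhood of $\overline{\mathbb{D}}$ (not merely on $\mathbb{D}$) so that $u_n$ qualifies for membership in $\mathcal{F}$. This forces one to unwind the hypothesis that $\mathcal{V}$ is an honest neighborhood of the compact set $\overline{\Omega}\supset\overline{\mathbb{D}}\times\{0\}$, and to choose the contour radius $r$ in the Cauchy integral small enough that the tube $\{|z_1|<1+\delta,\ |z_2|<r\}$ lies in $\mathcal{V}$.
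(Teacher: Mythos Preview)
Your argument is correct, but it is genuinely different from the paper's. The paper does not touch the Hartogs series or the family $\mathcal{F}$ at all. For the extension, it simply notes (implicitly using that a complete Hartogs neighborhood of $\overline{\Omega}$ has a schlicht envelope of holomorphy, which is pseudoconvex and hence contains $\mathcal{N}(\Omega)$) that $h$ is automatically holomorphic on $\mathcal{N}(\Omega)$. For the inequality, it argues by contradiction: if $|h(p)|>\sup_\Omega|h|$, then $1/(h-h(p))$ is holomorphic on a complete Hartogs domain $\Omega_1\supset\supset\Omega$; its schlicht envelope of holomorphy $\widetilde{\Omega_1}$ is pseudoconvex and contains $\overline{\Omega}$, so $\mathcal{N}(\Omega)\subset\widetilde{\Omega_1}$, yet the extended function cannot be holomorphic at $p$.

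Your route trades the envelope-of-holomorphy machinery for a direct computation: you realize each $u_n=\tfrac{1}{n}(\log|a_n|-\log M)$ as a competitor in $\mathcal{F}$, which immediately gives convergence of the series on $\mathcal{N}(\Omega)$ via Proposition~\ref{description}, and then you sharpen the resulting geometric-series bound with the power trick $h\mapsto h^k$. This is more elementary and self-contained (it never leaves the explicit description of $\mathcal{N}(\Omega)$), and it yields a concrete formula for the extension. The paper's proof is shorter once one is willing to quote the Hartogs envelope-of-holomorphy facts, and it avoids the $k\to\infty$ step entirely. Both are valid; your version actually \emph{uses} the structure of $R^*$ that Proposition~\ref{description} provides, whereas the paper's argument would work for any $\mathcal{N}(\Omega)$ defined abstractly as the intersection of pseudoconvex neighborhoods, provided the schlichtness of envelopes is available.
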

\begin{proof} Note that since $h$ is holomorphic in a neighborhood of $\overline{\Omega}$, it is holomorphic on $\mathcal{N}(\Omega)$.
Next, assume the desired inequality is not true. In this case, $g(z_1,z_2)=\frac{1}{h(z_1,z_2)-h(p)}$ is a holomorphic function on some complete Hartogs domain $\Omega_1$ that compactly contains $\Omega$. 

The domain $\Omega_1$ may not be pseudoconvex but its envelope of holomorphy, denoted by $\widetilde{\Omega_1}$ that is a single-sheeted(schlicht) and complete Hartogs domain, 
is pseudoconvex (see \cite[page 183]{VlaBook}). 
Moreover, by definition any function holomorphic on $\Omega_1$ extends to a holomorphic function on the envelope of holomorphy $\widetilde{\Omega_1}$. 

In particular, $g(z_1,z_2)$ extends as holomorphic on $\widetilde{\Omega_1}$ and therefore the point $p$ can not be in $\widetilde{\Omega_1}$. But this is impossible since $p$ is a point in $\mathcal{N}(\Omega)$. \end{proof}

The next lemma, from \cite{ZeytuncuNeben}, is an approximation result that is similar to the one in \cite{BarrettFornaess}. Take a function $h$ holomorphic on $\Omega$ and expand it  as follows: $$h(z_1,z_2)=\sum_{k=0}^{\infty}a_k(z_1)z_{2}^k,$$ where each $a_k(z_1)$ is a holomorphic function on $\mathbb{D}$. Next, define the following functions (that are polynomials in $z_2$) for any $N\in\mathbb{N}$, 
\begin{equation}
\mathcal{P}_N(z_1,z_2)=\sum_{k=0}^{N}a_k\left(\frac{z_1}{1+\frac{1}{N}}\right)z_2^k.
\end{equation}
Clearly each $\mathcal{P}_N$ is holomorphic in a neighborhood of $\overline{\Omega}$. 

\begin{lemma}\label{approximation}
If $h \in A^{\infty}(\Omega)$, then the sequence of functions $\left\{\mathcal{P}_N\right\}$ converges uniformly to $h$ on $\overline{\Omega}$.
\end{lemma}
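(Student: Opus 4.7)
The plan is to split the error as the sum of a \emph{tail} and a \emph{shift}:
$$\mathcal{P}_N(z_1,z_2)-h(z_1,z_2)=\sum_{k=0}^{N}\bigl[a_k(\lambda_N z_1)-a_k(z_1)\bigr]z_2^{k}-\sum_{k=N+1}^{\infty}a_k(z_1)z_2^{k},$$
where $\lambda_N=1/(1+1/N)$, and to show each piece tends to zero uniformly on $\overline{\Omega}$ using the decay of $|a_k|$ and $|a_k'|$ that follows from $h\in A^{\infty}(\Omega)$.

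For the tail, the hypothesis $h\in A^{\infty}(\Omega)$ gives $\partial^{m}_{z_2}h\in L^{\infty}(\Omega)$ for each $m$. Applying Cauchy's integral formula to the Hartogs coefficients of $\partial_{z_2}^{m}h$ on a circle of radius $r<e^{-\psi(z_1)}$ and letting $r\nearrow e^{-\psi(z_1)}$ yields
$$|a_k(z_1)|\le C_m k^{-m}e^{(k-m)\psi(z_1)},\qquad k\ge m.$$
Since $|z_2|\le e^{-\psi(z_1)}$ on $\overline{\Omega}$ and $\psi$ is bounded below, one has $|a_k(z_1)z_2^{k}|\le C'_{m}k^{-m}$, so the tail is uniformly of order $N^{1-m}$; choosing $m\ge 2$ gives uniform convergence to zero on $\overline{\Omega}$.

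For the shift, I would write
$$a_k(\lambda_N z_1)-a_k(z_1)=-(1-\lambda_N)z_1\int_{0}^{1}a_k'\bigl((1-t(1-\lambda_N))z_1\bigr)\,dt,$$
which produces a factor $1-\lambda_N=1/(N+1)$. An analogous Cauchy estimate applied to $\partial_{z_1}\partial_{z_2}^{m}h\in L^{\infty}(\Omega)$ gives $|a_k'(\zeta)|\le C^{(1)}_{m}k^{-m}e^{(k-m)\psi(\zeta)}$. The main obstacle is that $\psi(\zeta)$ along the segment $[z_1,\lambda_N z_1]$ can exceed $\psi(z_1)$, so $|a_k'(\zeta)z_2^{k}|$ is not immediately dominated by a summable sequence. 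I would resolve this by splitting $\overline{\Omega}$ into a core $\{|z_1|\le 1-\eta\}$, where the segment lies in a compact subset of $\mathbb{D}$ on which $\psi$ is bounded and the $1/N$-gain dominates, and a collar $\{|z_1|>1-\eta\}$, where the shift is bounded directly via the uniform $L^{\infty}$ bounds on $h$ and on $\mathcal{P}_N$ (the latter derived from the same Cauchy estimates, using that $\lambda_N\overline{\mathbb{D}}\subset\subset\mathbb{D}$ so $\psi(\lambda_N z_1)$ is uniformly bounded for $z_1\in\overline{\mathbb{D}}$); choosing $\eta=\eta(N)\to 0$ slowly balances the two contributions and yields uniform convergence on $\overline{\Omega}$.
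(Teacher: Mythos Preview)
Your tail estimate is exactly the paper's proof: from $\partial_{z_2}^2 h\in L^\infty(\Omega)$ the Cauchy estimates give $|a_k(z_1)z_2^k|\le C/k^2$ uniformly on $\overline{\Omega}$, and the paper stops there with the sentence ``this estimate is enough for the uniform convergence.'' The paper does not explicitly treat the shift from $a_k(z_1)$ to $a_k(\lambda_N z_1)$, so your decomposition and your attempt to control the shift already go beyond what the paper writes down.

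The gap is in your collar argument. First, $\psi$ is assumed bounded only from \emph{below}; the claim that $\psi(\lambda_N z_1)$ is uniformly bounded for $z_1\in\overline{\mathbb{D}}$ holds for each fixed $N$ but not uniformly in $N$, since $\lambda_N\overline{\mathbb{D}}$ exhausts $\mathbb{D}$ as $N\to\infty$ and $\psi$ may be unbounded above. More fundamentally, even granting a bound $\sup_{\overline\Omega}|\mathcal{P}_N|\le M$ independent of $N$, this yields only
\[
\sup_{\{|z_1|>1-\eta\}\cap\overline{\Omega}}|\mathcal{P}_N-h|\le M+\|h\|_{L^\infty(\Omega)},
\]
a constant that does not shrink as the collar thins: a supremum over a small set is not small merely because the set is. Thus there is no collar quantity tending to zero for your choice of $\eta(N)\to 0$ to ``balance'' against, and the shift remains uncontrolled on $\overline{\Omega}$ as written.
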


\begin{proof}
Note that $h$ easily extends to the boundary of each fiber over an interior point of $\mathbb{D}$. For $(z_1,z_2)\in \Omega$ and $k\geq 2$, by the Cauchy's inequalities;
\begin{align*}
|a_k(z_1)z_2^k|&=\left|\frac{1}{k!}\frac{(k-2)!}{2\pi i}z_2^k\int_{|\zeta|=e^{-\psi(z_1)}}\frac{\frac{\partial^2}{\partial \zeta^2}h(z_1,\zeta)}{\zeta^{k-1}}d\zeta\right|\\
&\leq \frac{1}{2\pi k(k-1)}\left(e^{-\psi(z_1)}\right)^k2\pi e^{-\psi(z_1)}\left(\sup_{\Omega}\left|\frac{\partial^2}{\partial z_2^2}h\right|\right)\frac{1}{(e^{-\psi(z_1)})^{k-1}}\\
&\leq \frac{C}{k^2}
\end{align*}
for some global constant $C$ (that depends on $\psi$ (and hence the domain $\Omega$) and the function $h$). This estimates is enough for the uniform convergence.
\end{proof}

Recall each $\mathcal{P}_N$ is holomorphic on a neighborhood of $\overline{\Omega}$ and consequently on $\mathcal{N}(\Omega)$. But by Lemma \ref{uniform}, the uniform convergence
percolates onto $\mathcal{N}(\Omega)$ and therefore any function in $A^{\infty}(\Omega)$ extends to a holomorphic function on $\mathcal{N}(\Omega)$. 

When we apply the argument above to $G_{q_1}$, we note that $G_{q_1}$ is a holomorphic function on $\mathcal{N}(\Omega)$. In particular, $G_{q_1}(q_1,z_2)=f(q_1,z_2)$ extends to a larger disc $|z_2|<e^{-R^*(q_1)}$.

Recall $q_1\in \mathcal{U}$ is an arbitrary point, and therefore we observe that for any point $q_1\in \mathcal{U}$; the holomorphic function $f(q_1,z_2)$ extends from the disc $|z_2|<e^{-\psi(q_1)}$ to a strictly larger disc $|z_2|<e^{-R^*(q_1)}$. This implies $f(z_1,z_2)$ extends a holomorphic function (by the joint analyticity lemma of Hartogs) past the boundary point $(p_1,e^{\psi(p_1)})$ but this is a contradiction since $f$ is assumed to be non-extendable. Hence we conclude that indeed, $$\mathcal{N}(\Omega)=\Omega.$$

\section{Proof of Theorem \ref{Sobolev}}

Let $\chi(z)$ be a nonzero compactly supported smooth radial function on $\mathbb{D}$. The key observation is the following lemma.

\begin{lemma}\label{rep}
$\BOmg\left(\frac{\chi(z_1)}{g(z_1)}\right)(z_1,z_2)=\frac{c}{g(z_1)}$
for some nonzero constant $c$. 
\end{lemma}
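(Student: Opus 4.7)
The plan is to identify the Bergman projection by its defining orthogonality property: I would show $c/g$ lies in $L^2_a(\Omega_g)$ and that the difference $(\chi - c)/g$ is $L^2(\Omega_g)$-orthogonal to every element of $L^2_a(\Omega_g)$ for a suitable constant $c$. Since $g$ is nonvanishing, $1/g$ is holomorphic on $\Omega_g$, and the fiber area $\pi |g(z_1)|^2$ from the $z_2$-integration cancels the singular factor $|g(z_1)|^{-2}$, giving $\int_{\Omega_g} |1/g|^2 dV = \pi^2$. Hence $c/g \in L^2_a(\Omega_g)$ for any constant $c$, and the same computation shows $\chi/g \in L^2(\Omega_g)$.

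The heart of the proof is to compute the inner product $\langle (\chi - c)/g,\, h\rangle$ for arbitrary $h \in L^2_a(\Omega_g)$, carrying out the $z_2$-integration first. Writing the Hartogs expansion $h(z_1,z_2) = \sum_{k\ge 0} a_k(z_1)z_2^k$, the angular integration over $|z_2| < |g(z_1)|$ annihilates every $k\ge 1$ term, and the surviving $k=0$ contribution, after using $|g|^2/g = \overline{g}$, reduces the inner product to a single disc integral
\[
\left\langle \frac{\chi - c}{g},\, h \right\rangle_{L^2(\Omega_g)} \;=\; \pi \int_{\mathbb{D}} \bigl(\chi(z_1) - c\bigr)\, \overline{g(z_1) a_0(z_1)}\, dA(z_1).
\]
Now $g a_0$ is holomorphic on $\mathbb{D}$ and lies in $L^1(\mathbb{D})$ (by Cauchy--Schwarz against the $L^2$ constraint inherited from $h$). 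The mean value property applied to this holomorphic $L^1$-function yields $\int_{\mathbb{D}} \overline{g a_0}\, dA = \pi \overline{g(0) a_0(0)}$, while the radial symmetry of $\chi$ (whose support sits strictly inside $\mathbb{D}$) together with the mean value property gives $\int_{\mathbb{D}} \chi\, \overline{g a_0}\, dA = 2\pi\, \overline{g(0) a_0(0)} \int_0^1 \chi(r) r\, dr$. Setting $c := 2 \int_0^1 \chi(r) r\, dr$ therefore makes the pairing vanish for every $h$, which establishes $\BOmg(\chi/g) = c/g$; this $c$ is nonzero for any standard (e.g.\ non-negative) choice of radial bump, since then $\int_0^1 \chi(r) r\, dr > 0$.

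The main technical point is justifying the interchange of series and integral when substituting the Hartogs expansion. The cleanest way to avoid this is to observe the orthogonal decomposition $L^2(\Omega_g) = \bigoplus_{k \ge 0} \widetilde{H}_k$, where $\widetilde{H}_k$ is the closed subspace of $L^2(\Omega_g)$ consisting of functions of the form $a(z_1) z_2^k$; since $(\chi - c)/g$ lies entirely in $\widetilde{H}_0$, only the $z_2$-independent slice of $h$ contributes to the pairing, and the remainder of the argument is an essentially one-dimensional mean-value computation on $\mathbb{D}$.
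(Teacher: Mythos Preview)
Your proof is correct and follows essentially the same route as the paper's: both integrate over the $z_2$-fiber first to produce the area factor $\pi|g(z_1)|^2$, simplify $|g|^2/g=\overline{g}$, and then apply the mean value property to the holomorphic function $g(z_1)h(z_1,0)$ on $\mathbb{D}$ (your $a_0$ is exactly $h(\cdot,0)$). You supply more justification than the paper does---the $L^1$ bound on $ga_0$ and the orthogonal decomposition $\bigoplus_k\widetilde{H}_k$ to sidestep the series interchange---and you correctly flag that $c\neq 0$ requires, e.g., $\chi\geq 0$, which the paper asserts without comment; since $\chi$ is ours to choose, this is harmless.
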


\begin{proof} 
Take a holomorphic function $h(z_1,z_2)$ that is in $L^2(\Omega_g)$ and consider the following two inner products $\Omega_g$.

\begin{align*}
\left<\frac{\chi(z_1)}{g(z_1)},h(z_1,z_2)\right>&=\pi \int_{\mathbb{D}} \frac{\chi(z_1)}{g(z_1)}\overline{h(z_1,0)}|g(z_1)|^2dA(z_1)\\
&=\pi \int_{\mathbb{D}} \chi(z_1)\overline{h(z_1,0)g(z_1)} dA(z_1)\\
&=c_1\overline{h(0,0)g(0)}\\
\left<\frac{1}{g(z_1)},h(z_1,z_2)\right>&=\pi \int_{\mathbb{D}} \frac{1}{g(z_1)}\overline{h(z_1,0)}|g(z_1)|^2dA(z_1)\\ 
&=\pi \int_{\mathbb{D}} \overline{h(z_1,0)g(z_1)} dA(z_1)\\
&=c_2\overline{h(0,0)g(0)}
\end{align*}
Note that $c_1,c_2$ are nonzero and we have $\frac{\chi(z_1)}{g(z_1)}-\frac{c_1}{c_2g(z_1)}$ is perpendicular to all square integrable holomorphic functions on $\Omega_g$.
\end{proof}

It is clear that $\frac{\chi(z_1)}{g(z_1)} \in W^k(\Omega_g)$ for all $k\geq 0$ and the regularity assumption implies that $\frac{1}{g(z_1)}=\frac{c_2}{c_1}\BOmg\left(\frac{\chi}{g}\right) \in W^k(\Omega_g)$ for all $k\geq 0$. This means for any $k\geq 0$,
\begin{equation}\label{derivatives}
\int_{\mathbb{D}}\left| \frac{\partial^k}{\partial z^k}\left(\frac{1}{g(z)}\right)\right|^2~|g(z)|^2dA(z)
\end{equation}
is finite. We use these estimates to conclude that $g(z)$ is uniformly continuous on $\mathbb{D}$.

First, we show that $|g(z)|$ does not decay too fast. Note that there exists a holomorphic function $h(z)$ on $\mathbb{D}$ such that $$g(z)=e^{h(z)}.$$
The case $k=1$ in \eqref{derivatives} gives $h'(z)$ is square integrable on $\mathbb{D}$ and if we let $h(z)=\sum_{n=0}^{\infty}h_nz^n$ we obtain the following.
\begin{align*}
|h(z)|^2&=\left|\sum_{n=0}^{\infty}h_nz^n\right|^2\\
&\leq \left(\sum_{n=1}^{\infty}|h_n|^2n\right)\left(\sum_{n=1}^{\infty}\frac{|z|^{2n}}{n}\right) + |h(0)|^2\\
&\lesssim ||h'||^2\log\left(\frac{1}{1-|z|^2}\right) + |h(0)|^2.
\end{align*}
This implies that there exists $M>0$ such that,
\begin{align*}
|\Re h(z)|&\leq|h(z)|\\
&\leq M\log\left(\frac{1}{1-|z|^2}\right) + |h(0)|.
\end{align*}

Without loss of generality, we can assume $|g(z)|<1$ and this gives us $\Re h(z)<0$ on $\mathbb{D}$. On the other hand, by using the previous estimate we get 
\begin{equation*}
\Re h(z)\geq -|h(0)|-M\log\left(\frac{1}{1-|z|^2}\right).
\end{equation*}
This gives us a lower bound on $g(z)$,
\begin{equation*}
|g(z)|=e^{\Re h(z)}\geq C(1-|z|^2)^M.
\end{equation*}

We modify \eqref{derivatives} by using this lower bound and  we get for any $k\geq 0$,
\begin{equation*}
\int_{\mathbb{D}}\left| \frac{\partial^k}{\partial z^k}\left(\frac{1}{g(z)}\right)\right|^2~(1-|z|^2)^{2M}dA(z)
\end{equation*}
is finite.

Consider the Taylor series expansion of $\frac{1}{g(z)}=\sum_{n=0}^{\infty}b_nz^n$. By using the orthogonality and Beta functions we get
\begin{align*}
\int_{\mathbb{D}}\left| \frac{\partial^k}{\partial z^k}\left(\frac{1}{g(z)}\right)\right|^2~(1-|z|^2)^{2M}dA(z)&=\int_{\mathbb{D}}\left| \sum_{n=0}^{\infty}b_{n+k}\frac{(n+k)!}{n!}z^{n}\right|^2~(1-|z|^2)^{2M}dA(z)\\
&=\sum_{n=0}^{\infty}|b_{n+k}|^2\frac{((n+k)!)^2}{(n!)^2}\int_{\mathbb{D}}|z|^{2n}(1-|z|^2)^{2M}dA(z)\\
&\geq C_k \sum_{n=0}^{\infty}|b_{n+k}|^2n^{2k}\frac{1}{n^{2M+1}}
\end{align*}
for any $k\geq 0$. Therefore, $\lim_{n\to \infty}n^j|b_n|=0$ for any $j\geq0$.

In particular, this implies $\frac{1}{g(z)}$ is bounded on $\mathbb{D}$ and hence $g(z)$ is bounded from below. We also conclude that
$\frac{1}{g(z)}\in W^2(\mathbb{D})$. These last two implications indicate that $g(z)$ is uniformly continuous on $\mathbb{D}$. 
Furthermore, for any $\epsilon>0$, there exists $\delta(\epsilon)=\delta>0$ such that 
\begin{equation*}
|g(z)|< (1+\epsilon)\left|g\left(\frac{z}{1+\delta}\right)\right|
\end{equation*}
for all $z\in\mathbb{D}$.

Now let us define the following domains for any $\epsilon>0$
\begin{equation*}
\Omega_{\epsilon}=\left\{(z_1,z_2)\in \mathbb{C}^2~|~ |z_1|< 1+\delta \text{ and } |z_2|<(1+\epsilon)\left|g\left(\frac{z_1}{1+\delta}\right)\right|\right\}.
\end{equation*}

It is clear that each $\Omega_{\epsilon}$ is a pseudoconvex domain and each one compactly contains $\Omega_g$. Furthermore, 
\begin{equation*}
\bigcap_{\epsilon>0}\Omega_{\epsilon}=\overline{\Omega_g}~.
\end{equation*}
This shows the existence of a Stein neighborhood basis and finishes the proof of Theorem \ref{Sobolev}.

\subsection*{Zeros in $\mathbb{D}$}

Suppose $g(z)$ is a nonconstant holomorphic function on $\mathbb{D}$ and suppose $g(z_0)=0$ for some $z_0\in\mathbb{D}$. Then $\Omega_g$ behaves like the 
Hartogs triangle around this point and therefore the closure does not admit a Stein neighborhood basis. 

On the other hand, we note that $\BOmg$ is not exactly regular. Let's suppose that $z_0=0$. For the general case, we can use an automorphism of the unit disc and the fact that an automorphism of the disc preserves the regularity properties of $\BOmg$. We factor $g(z)$ as $z^jh(z)$ for some integer $j$ and some holomorphic function $h(z)$ that is zero-free in a neighborhood of $0$. We also take a radially symmetric real cut-off function $\chi$ that is supported in a small enough
neighborhood of $0$. We consider the function $$\chi(z_1)\frac{g(z_1)\overline{g(z_1)}^2}{|h(z_1)|^4}=\chi(z_1)\frac{|z_1|^{2j}\overline{z_1}^j}{h(z_1)}.$$ 
This function belongs to $W^k(\Omega_g)$ for any integer $k\geq 0$. However, Lemma \ref{rep} implies that
\begin{equation*}
\BOmg\left(\chi(z_1)\frac{g(z_1)\overline{g(z_1)}^2}{|h(z_1)|^4}\right)=\frac{c}{g(z_1)} 
\end{equation*}
for some constant $c$. It is clear that $\frac{c}{g(z_1)}\not\in W^1(\Omega_g)$. Therefore, $\BOmg$ is not exactly regular.

\section*{Acknowledgments} I'd like to thank H.P. Boas, S. \c{S}ahuto\~{g}lu and S. Ravisankar for useful remarks on this paper. I'd also like to thank the anonymous referee for valuable comments that significantly improved the overall quality of the paper.

\vskip 1cm
\bibliographystyle{amsalpha}
\bibliography{Nebenhulle}

\vskip 1 cm
\end{document}